\newtheorem{theorem}{Theorem}[section]
\newtheorem*{theorem*}{Theorem}
\newtheorem{lemma}[theorem]{Lemma}
\newtheorem{proposition}[theorem]{Proposition}
\newtheorem{corollary}[theorem]{Corollary}
\newtheorem{defn}[theorem]{Definition}
\numberwithin{equation}{section}
\newtheorem{lthm}{Theorem} 
\theoremstyle{remark}
\newtheorem{remark}[theorem]{Remark}
\newcommand{\TT}{\mathbf{T}}
\newcommand{\R}{\mathbb{R}}
\newcommand\EatDot[1]{}
\newcommand{\Fp}{\FF_p}
\newcommand{\RR}{{\mathbb R}}
\newcommand{\cS}{{\mathcal{S}}}
\newcommand{\cX}{\mathcal{X}}
\newcommand{\ZZ}{\mathbb{Z}}
\newcommand{\FF}{\mathbb{F}}
\definecolor{Green}{rgb}{0.0, 0.5, 0.0}
\newcommand{\Z}{\mathbb{Z}}
\newcommand{\Fq}{\mathbb{F}_q}
\newcommand{\Div}{\mathrm{Div}}
\newcommand{\Divz}{\mathrm{Div}^0}
  \DeclareFontFamily{U}{wncy}{}
  \DeclareFontShape{U}{wncy}{m}{n}{<->wncyr10}{}
  \DeclareSymbolFont{mcy}{U}{wncy}{m}{n}
  \DeclareMathSymbol{\sha}{\mathord}{mcy}{"58}
  \DeclareMathSymbol{\zhe}{\mathord}{mcy}{"11}
\title[Zeta functions, isogeny graphs and modular curves]{On the zeta functions of supersingular isogeny graphs and modular curves}
\let\@wraptoccontribs\wraptoccontribs
\author[A. Lei]{Antonio Lei}
\address[Lei]{Department of Mathematics and Statistics\\University of Ottawa\\
150 Louis-Pasteur Pvt\\
Ottawa, ON\\
Canada K1N 6N5}
\email{antonio.lei@uottawa.ca}
\author[K. Müller]{Katharina Müller}
\address[Müller]{D\'epartement de Math\'ematiques et de Statistique\\
Universit\'e Laval, Pavillion Alexandre-Vachon\\
1045 Avenue de la M\'edecine\\
Qu\'ebec, QC\\
Canada G1V 0A6}
\email{katharina.mueller.1@ulaval.ca}
\subjclass[2020]{11M41, 05C30 (primary), 11G18,  14G35 (secondary)}
\keywords{Hasse--Weil zeta functions, Ihara zeta functions, modular curves, supersingular isogeny graphs}
\begin{document}
\begin{abstract}
Let $p$ and $q$ be distinct prime numbers, with $q\equiv 1\pmod{12}$. Let $N$ be a positive integer that is coprime to $pq$. We prove a formula relating the Hasse--Weil zeta function of the modular curve $X_0(qN)_{\Fq}$ to the Ihara zeta function of the  $p$-isogeny graph of supersingular elliptic curves defined over $\overline{\mathbb{F}_q}$ equipped with a $\Gamma_0(N)$-level structure.  When $N=1$, this recovers a result of Sugiyama.
\end{abstract}

\maketitle

\section{Introduction}
\label{S: Intro}
Zeta functions hold significant importance across various realms of number theory. They serve as powerful tools, which encode intricate arithmetic information of  mathematical objects. In this article, we study relations between two families of such zeta functions, namely the Hasse--Weil zeta functions attached to modular curves and the Ihara zeta functions attached to supersingular isogeny graphs.

Given an algebraic curve $C$ defined over a finite field $k$, the Hasse--Weil zeta function attached to $C$ encodes the number of rational points on $C$ defined over a finite extension of $k$ (see \cite[\S1.5]{li-book} for a detailed discussion). We are particularly interested in the case where $C$ is a modular curve. More specifically, let $p$ and $q$ be two distinct prime numbers with $q\equiv 1\pmod{12}$. Let $X_0(q)$ denote the modular curve classifying isomorphism classes of elliptic curves equipped with a $\Gamma_0(q)$-level structure. Let $W(X_0(q)_{\Fp},S)\in 1+S\ZZ[[S]]$ be the Hasse--Weil zeta function attached to $X_0(q)_{\Fp}$ (see Definition~\ref{def:Hase-weil-zeta} for a precise definition).

In graph theory, the Ihara zeta function is defined using prime closed geodesics. See \cite[Chapter~2]{terras} for a comprehensive survey. These functions are related to the adjacency matrix and the valency matrix of a graph. Furthermore, analogous to the analytic class number formula for its number field counterpart, the Ihara zeta function can be used to compute the size of the Jacobian of a graph. In the present article, we are interested in understanding connections between the Hasse--Weil zeta function coming from modular curves to the Ihara zeta functions of supersingular isogeny graphs, whose definition we review below.

Let $\Sigma=\{E_1,\dots, E_n\}$  denote a set of representatives of isomorphism classes of supersingular elliptic curves defined over $\overline{\Fq}$, where $n=\frac{q-1}{12}$. We define a graph $X_p^q(1)$ whose set of vertices is $\Sigma$ and the set of edges is given by $p$-isogenies (see Definition~\ref{def:graph}). Let $Z(X_p^q(1),S)\in\ZZ[S]$ be the Ihara zeta function of this graph (see Definition~\ref{def:ihara-zeta} for a precise definition).

Sugiyama showed in \cite[Thereom~1.1]{sugiyama} that the two zeta functions discussed above are related by the following explicit equation: 
\begin{equation}
    W(X_0(q)_{\Fp},S)Z(X_p^q(1),S)=\frac{1}{(1-S)^2(1-pS)^2(1-S^2)^{\frac{n(p-1)}{2}}}
\end{equation}
(note that we have replaced the symbol $N$ in loc. cit. by $q$ here; in the present article, $N$ will denote a positive integer that is not necessarily a prime number).
We remark that this relation has also been observed by Li in \cite[P.54]{li-book}.

The goal of this article is to generalize this result to more general levels. More specifically, we replace $X_0(q)_{\Fp}$ by $X_0(qN)_{\Fp}$, where  $N$ is a positive integer coprime to $pq$. Correspondingly, the graph $X_p^q(1)$ shall be replaced by the graph whose vertices are the isomorphism classes of pairs $(E,C)$, where $E\in\Sigma$ and $C$ is a cyclic subgroup of order $N$ in $E$, while the edges are still given by $p$-isogenies.  
\begin{lthm}[Corollary \ref{cor:comparison-zeta-function}]\label{thmA}
    The following equality holds:
    \[W(X_0(qN)_{\Fp},S)W(X_0(N)_{\Fp},S)^{-2}Z(X_p^q(N),S)=(1-S^2)^{\chi(X_p^q(N))},\]
where $\chi(X_p^q(N))$ denotes the Euler characteristic of the graph $X_p^q(N)$.\end{lthm}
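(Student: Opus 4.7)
The plan is to express both sides of the identity as determinants involving the Hecke operator $T_p$ on weight-$2$ cusp forms, via Eichler--Shimura on the modular-curve side and the Jacquet--Langlands correspondence (equivalently the Eichler basis theorem) on the graph side, and then to compare them using the Bass--Ihara determinant formula.

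On the modular-curve side, since $p\nmid qN$, both $X_0(N)$ and $X_0(qN)$ have good reduction at $p$, and the Eichler--Shimura relation gives
\[
 W(X_0(M)_{\Fp}, S) \;=\; \frac{\det\!\bigl(I-T_pS+pS^2\,\bigm|\, S_2(\Gamma_0(M))\bigr)}{(1-S)(1-pS)}
\]
for $M\in\{N,qN\}$. The $T_p$-stable decomposition $S_2(\Gamma_0(qN)) = S_2(\Gamma_0(N))^{\oplus 2}\oplus S_2^{q\text{-new}}(\Gamma_0(qN))$, available because $p\neq q$, then yields
\[
 \frac{W(X_0(qN)_{\Fp},S)}{W(X_0(N)_{\Fp},S)^{2}} \;=\; (1-S)(1-pS)\,\det\!\bigl(I-T_pS+pS^2\,\bigm|\,S_2^{q\text{-new}}(\Gamma_0(qN))\bigr).
\]

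On the graph side, the congruence $q\equiv 1\pmod{12}$ rules out $j=0,1728$ from the supersingular locus over $\overline{\Fq}$, so each vertex $(E,C)$ of $X_p^q(N)$ has automorphism group $\{\pm 1\}$ and the graph is $(p+1)$-regular. Its adjacency matrix $A$ is then the Brandt matrix recording the action of $T_p$ on the free module $\CC[V(X_p^q(N))]$. Jacquet--Langlands for the definite quaternion algebra ramified exactly at $q$ and $\infty$, equipped with an Eichler order of level $N$, provides a $T_p$-equivariant isomorphism
\[
 \CC[V(X_p^q(N))] \;\cong\; \CC \;\oplus\; S_2^{q\text{-new}}(\Gamma_0(qN)),
\]
where $T_p$ acts as $p+1$ on the one-dimensional residual summand. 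Taking determinants yields
\[
 \det(I-AS+pS^2 I) \;=\; (1-S)(1-pS)\,\det\!\bigl(I-T_pS+pS^2\,\bigm|\,S_2^{q\text{-new}}(\Gamma_0(qN))\bigr),
\]
and substituting into the Bass--Ihara identity $Z(X_p^q(N),S)^{-1} = (1-S^2)^{-\chi(X_p^q(N))}\,\det(I-AS+pS^2 I)$, valid for a $(p+1)$-regular graph, makes the right-hand side match the expression obtained above for $W(X_0(qN)_{\Fp},S)/W(X_0(N)_{\Fp},S)^{2}$; the two determinants involving $S_2^{q\text{-new}}$ cancel and the claimed formula drops out.

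The main technical obstacle is pinning down the \emph{one-dimensional} residual summand in the Jacquet--Langlands decomposition: a priori one might fear a larger Eisenstein contribution indexed by ideal class characters of the quaternion order or by cusps of $X_0(qN)$, but these collapse because $\QQ$ has trivial class group, so that only the trivial character on the reduced-norm side survives. A secondary check is that $X_p^q(N)$ is indeed $(p+1)$-regular as a multigraph, with any loops and multi-edges correctly accounted for in the degree, so that the clean form of Bass--Ihara applies --- this again rests on the vanishing of extra automorphisms guaranteed by $q\equiv 1\pmod{12}$.
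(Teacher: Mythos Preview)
Your proof is correct and follows essentially the same architecture as the paper: Eichler--Shimura on the modular-curve side, Bass--Ihara on the graph side, $(p+1)$-regularity from $q\equiv 1\pmod{12}$, and a Hecke-equivariant identification of $\Div(X_p^q(N))$ with $\mathbb{R}\oplus S_2(\Gamma_0(qN))_{q\text{-new}}$ to match the two determinants.

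The one substantive difference is how that identification is obtained. You invoke Jacquet--Langlands (or the Eichler basis theorem) as a black box to get the $T_p$-equivariant splitting $\CC[V]\cong\CC\oplus S_2^{q\text{-new}}$ directly. The paper instead starts from Ribet's result that the abstract Hecke algebras $T$ and $\mathbf{T}$ are isomorphic, and then does real work (Proposition~\ref{prop:iso-S2-Div0}) to upgrade this ring isomorphism to a $T\otimes\RR$-module isomorphism $\Divz(X_p^q(N))\otimes\RR\cong S_2(\Gamma_0(qN))_{q\text{-new}}$: it decomposes both sides into $T_0$-eigenspaces, computes the minimal polynomials of the bad Hecke operators on each oldform block explicitly, and concludes by a dimension count. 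Your route is cleaner and more conceptual; the paper's route is more self-contained and makes explicit what Jacquet--Langlands would otherwise hide, at the cost of the somewhat ad hoc matrix computations in the two cases of the proposition.
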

We recover Sugiyama's main result from \cite{sugiyama} on noting that $\chi(X_p^q(1))=-\frac{n(p-1)}{2}$ and $W(X_0(1),S)=\frac{1}{(1-S)(1-pS)}$.

We briefly describe the proof of Theorem~\ref{thmA}. Let $\Divz(X_p^q(N))$ be the group of zero divisors on the vertices of the graph $X_p^q(N)$. Let $T$ be the Hecke algebra acting on $\Divz(X_p^q(N))$ and let $\mathbf{T}$ be the Hecke algebra acting on the space of $q$-new cuspforms of weight $2$ and level $qN$ (see Definitions~\ref{def:hecke-graph} and \ref{def:hecke-forms}).  We make use of a result of Ribet \cite{ribet90}, which says that these two Hecke algebras are isomorphic to deduce an isomorphism of $T\otimes\R$-modules $$\Divz(X_p^q(N))\otimes \RR\cong S_2(\Gamma_0(qN))_{q-\textup{new}}$$
(see Proposition~\ref{prop:iso-S2-Div0}). This generalizes the corresponding result for $N=1$ in 
 \cite[Proposition~3.2]{sugiyama}. The aforementioned isomorphism allows us to relate the Brandt matrix (see Definition~\ref{def:brandt}) to the Ihara zeta function. To conclude the proof, we relate the Brandt matrix to the Hasse--Weil zeta function, which can be described using the Fourier coefficients of cuspforms of level $qN$.

\subsection*{Acknowledgement}
  We thank the anonymous referee for very helpful comments and suggestions. The authors' research is supported by the NSERC Discovery Grants Program RGPIN-2020-04259 and RGPAS-2020-00096.

\section{On the Ihara zeta function of a supersingular isogeny graph}

The goal of this section is to give an explicit formula of the Ihara zeta function of a supersingular isogeny graph. The main result of the present section is  Corollary~\ref{cor:zeta-function-graph}.
\subsection{Defining supersingular isogeny graphs}

Let $p$ and $q$  be two distinct  prime numbers, and $N$  a nonnegative integer coprime to $pq$. Assume that $q\equiv 1\pmod {12}$. Let $B$ be a quaternion algebra that is only ramified at $\infty$ and $q$. Let $R$ be a fixed maximal order in $B$ and let $I_1,\dots,I_n$ be fixed representatives for the ideal classes in $R$. 
For $1\le i\le n$ let $R_i$ be the right order of $I_i$. There are $n$ distinct isomorphism classes $\Sigma=\{E_1,\dots, E_n\}$ of supersingular elliptic curves defined over $\overline{\mathbb{F}_q}$ such that $\textup{End}(E_i)=R_i$. As $q\equiv 1\pmod{12}$, it follows furthermore that $R_i^\times=\{\pm 1\}$ for $1\le i\le n$ and that $n=\frac{q-1}{12}$ (see the discussion in \cite[\S3.1]{sugiyama}, bearing in mind that the symbol $N$ in loc. cit. is replaced by  $q$ here).

\begin{defn}\label{def:graph}
We define an undirected graph $X_p^q(N)$ whose set of vertices is given by
\[V(X_p^q(N))=\{(E,C)\mid E\in \Sigma, C\subset E[N] \textup{ a cyclic subgroup of order $N$}\}.\] 
We draw an edge between $(E,C)$ and $(E',C')$ whenever there is a $p$-isogeny $\phi\colon E\to E'$  such that $\phi(C)=C'$ (loops are allowed). 

We denote by $\Div(X_p^q(N))$ and $\Divz(X_p^q(N))$ the divisors and zero divisors of $X_p^q(N)$ over $\Z$, respectively. 
\end{defn}

\subsection{Modular curves and Hecke algebras}
Let $X_0(qN)$ be the modular curve of level $\Gamma_0(qN)$. It classifies isomorphism classes of pairs $(E,C)$, where $E$ is an elliptic curve and $C$ is a cyclic subgroup of order $qN$ in $E$. The curve $X_0(qN)_{\Fq}$ consists of two copies of $X_0(N)_{\Fq}$ intersecting at supersingular points. 

We shall relate $\Divz(X_p^q(N))$ to the space of weight two $q$-newforms of level $qN$, which we introduce below.

\begin{defn}
    We write $S_2(\Gamma_0(qN))$ for the $\RR$-vector space of weight-two cuspforms of level $\Gamma_0(qN)$. Analogously we write $S_2(\Gamma_0(N))$ for the $\RR$-vector space of weigh-two cuspforms of level $\Gamma_0(N)$. 
\end{defn}
We have two natural embeddings
\[\iota_1\colon S_2(\Gamma_0(N))\to S_2(\Gamma_0(qN)),\quad f(z)\mapsto f(z)\]
and 
\[\iota_2\colon S_2(\Gamma_0(N))\to S_2(\Gamma_0(qN)), \quad f(z)\mapsto f(qz).\]
\begin{defn}
    We call the space $$\iota_1(S_2(\Gamma_0(N))\oplus \iota_2(S_2(\Gamma_0(N))\subset S_2(\Gamma_0(qN))$$ the \textbf{$N$-old space} of $S_2(\Gamma_0(qN))$, denoted by $S_2(\Gamma_0(qN))_{q-\textup{old}}$. We define the \textbf{$N$-new space} $S_2(\Gamma_0(qN))_{q-\textup{new}}$ to be the orthogonal complement of $S_2(\Gamma_0(qN))$ with respect to the Petersson inner product. 
\end{defn}

We now introduce the definition of Hecke operators.
\begin{defn}
Let $\ell$ be a prime number that is coprime to $qN$. We define the action of the Hecke correspondence $T_\ell$ on $X_0(qN)$ by sending $(E,C)$ to
\[T_\ell(E,C)=\sum_D(E/D,C+D/D),\]
where the sum runs over all cyclic subgroups $D$ of $E$ of order $\ell$.
If $\ell\mid N$, we define
\[T_\ell(E,C)=\sum_D (E/D,C+D/D),\]
where the sum runs over all cyclic subgroups of order $\ell$ not intersecting $C$. \end{defn}

As $q$ is coprime to $N$, we can decompose every $qN$ level structure into a product $C\times C_q$, where $C$ is of level $N$ and $C_q$ is of level $q$.
Let $w_q$ be the Atkin--Lehner involution on $X_0(qN)$ defined by sending $(E,C,C_q)$ to $(E/C_q,C+C_q/C_q,E[q]/C_q)$. It turns out that $T_q$ acts as $-w_q$ on the toric part of $X_0(qN)$ and that $w_q$ acts as the Frobenius on the supersingular points \cite[propositions 3.7 and 3.8]{ribet90}.

Recall that the vertices of $X_p^q(N)$ are tuples $(E,C)$ of supersingular elliptic curves $E$ and cyclic subgroups of order $N$. Thus, $T_\ell$ acts  on $\Div(X_p^q(N))$ and $\Divz(X_p^q(N))$.

\begin{defn}
\label{def:hecke-graph}
    Let $T$ be the $\ZZ$ algebra generated by all the operators $T_\ell$ as operators on $\Divz(X_p^q(N))$.
\end{defn}

We remark that the Hecke operator $T_\ell$ preserves both $S_2(\Gamma_0(qN))_{q-\textup{old}}$ and $S_2(\Gamma_0(qN))_{q-\textup{new}}$. This allows us to give the following:
\begin{defn}
\label{def:hecke-forms}
Let $\mathbf{T}$ (resp. $\mathbf{T}'$) be the subalgebra of $\textup{End}(S_2(\Gamma_0(qN))_{q-\textup{new}}$ (resp. $\textup{End}(S_2(\Gamma_0(qN))$) generated by the Hecke operators $T_\ell$ as $\ell$ runs through all  prime numbers.
\end{defn}

As both algebras $T$ and $\mathbf{T}$ are generated by the Hecke operators $T_\ell$, there is a natural map  $T\to \mathbf{T}'\to \mathbf{T}$.
We recall the following result of Ribet. 
\begin{theorem}
\label{thm:iso-hecke-algebras}
    The Hecke algebras $T$ and $\mathbf{T}$ isomorphic. In particular,  an element in $t\in T$ acts trivially on $\textup{Div}^0(X_l^q(N))$ if and only if it has trivial image in $\mathbf{T}$.
\end{theorem}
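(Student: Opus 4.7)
The plan is to deduce the isomorphism from the Jacquet--Langlands correspondence, in the explicit form developed in \cite{ribet90}. The key intermediate object is a Hecke-equivariant $\RR$-linear isomorphism between $\Divz(X_p^q(N))\otimes \RR$ and $S_2(\Gamma_0(qN))_{q-\textup{new}}$; once this is established, the identification of the $\ZZ$-subalgebras generated by the $T_\ell$'s inside the two endomorphism rings is essentially formal.

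First I would set up the quaternionic dictionary. Since the definite quaternion algebra $B$ is ramified exactly at $q$ and $\infty$, one may fix an Eichler order $\cO$ of level $N$ inside a chosen maximal order. Using the standard identification of supersingular elliptic curves over $\overline{\FF_q}$ with the ideal class set of a maximal order, pairs $(E,C)$ with $E$ supersingular and $C\subset E$ a cyclic subgroup of order $N$ correspond bijectively to the set of left $\cO$-ideal classes. Under this bijection, $\Div(X_p^q(N))\otimes\RR$ becomes the space of $\RR$-valued functions on a finite set, the Hecke operator $T_\ell$ for $\ell\nmid qN$ acts as the $\ell$th Brandt matrix, and $\Divz(X_p^q(N))\otimes\RR$ is precisely the kernel of the augmentation map, i.e.\ the orthogonal complement of the constant-function line.

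Next I would invoke the Jacquet--Langlands correspondence to match this cuspidal quaternionic space Hecke-equivariantly with $S_2(\Gamma_0(qN))_{q-\textup{new}}$. Equivariance must be checked prime by prime: for $\ell \nmid qN$ this is the classical Eichler trace formula matching Brandt matrices with the action of $T_\ell$ on newforms; for $\ell \mid N$ one uses the Eichler-level structure on $\cO$ together with the excluding-clause in the definition of $T_\ell$; and at the delicate prime $\ell=q$ one appeals to \cite[Propositions~3.7--3.8]{ribet90}, together with the Deligne--Rapoport description of $X_0(qN)_{\Fq}$ as two copies of $X_0(N)_{\Fq}$ crossed at the supersingular locus, to identify $T_q$ on both sides. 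Combined, these matchings give the desired Hecke-equivariant isomorphism at the level of modules.

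Finally, because both $T$ and $\mathbf{T}$ are by definition the $\ZZ$-subalgebras of the respective endomorphism rings generated by the same collection $\{T_\ell\}_\ell$, a Hecke-equivariant isomorphism of modules forces the images of these generating families to coincide, yielding $T\cong \mathbf{T}$. The ``in particular'' assertion is then immediate, since $T$ acts faithfully on $\Divz(X_p^q(N))$ by construction. I expect the main obstacle to be the careful prime-by-prime verification of Hecke-equivariance at the bad primes, most notably at $\ell=q$, where the correct bookkeeping hinges on the Atkin--Lehner description of the supersingular component group of $J_0(qN)$ and is exactly the content of \cite[\S3]{ribet90}.
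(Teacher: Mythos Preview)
The paper's own proof of this theorem is a bare citation: ``See \cite[Theorem~3.10]{ribet90}.'' So there is nothing to compare at the level of argument; your proposal is an attempt to unpack what lies behind that citation, and the outline you give---quaternionic dictionary via Eichler orders, Jacquet--Langlands/Eichler to match Brandt matrices with Hecke eigenvalues on the $q$-new space, then read off the ring isomorphism from faithfulness---is a correct summary of the circle of ideas in \cite[\S3]{ribet90}.

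One structural point is worth flagging. You propose to first establish the Hecke-equivariant $\RR$-linear isomorphism
\[
\Divz(X_p^q(N))\otimes\RR \;\cong\; S_2(\Gamma_0(qN))_{q\textup{-new}}
\]
and then deduce $T\cong\mathbf{T}$ from it. In the paper the logic runs the other way: the present theorem (taken from Ribet) is used as input to prove exactly that module isomorphism in Proposition~\ref{prop:iso-S2-Div0}, via an eigenspace decomposition and a dimension count. Your route is not wrong---indeed it is closer in spirit to how Ribet actually argues, and it would render Proposition~\ref{prop:iso-S2-Div0} an immediate corollary---but you should be aware that it inverts the dependency structure the paper has chosen. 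In particular, if you were writing this inside the present paper you would need to make sure your module isomorphism is obtained independently (from Jacquet--Langlands and the Eichler trace formula directly), not by invoking Proposition~\ref{prop:iso-S2-Div0}, to avoid circularity.

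Your identification of the delicate step is accurate: full Hecke-equivariance at primes $\ell\mid N$ and especially at $\ell=q$ is where the work lies, and this is precisely what \cite[Propositions~3.7--3.8 and Theorem~3.10]{ribet90} supply.
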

\begin{proof}
See \cite[Theorem 3.10]{ribet90}.
\end{proof}

\subsection{Relation between the zero divisor group and $q$-newforms}

We now prove the key technical ingredient of the proof of Theorem~\ref{thmA}, where we relate the zero divisor of the graph $X^q_p(N)$ to $S_2(\Gamma_0(qN))_{q\textup{-new}}$.

In what follows, we shall regard $S_2(\Gamma_0(qN))_{q\textup{-new}}$ as a $T$-module after identifying $T$ with $\TT$ via Theorem~\ref{thm:iso-hecke-algebras}. 
\begin{proposition}\label{prop:iso-S2-Div0}
There is an isomorphism of $T\otimes \RR$-modules
\[S_2(\Gamma_0(qN))_{q\textup{-new}}\cong\Divz(X_p^q(N))\otimes \mathbb{R}.\footnote{
    Note that the left-hand side of the isomorphism does not depend on the prime $p$. On the right-hand side, while the prime $p$ appears in the notation, it is in fact independent of $p$. This is because the divisor group is defined in terms of the set of vertices of the graph $X_p^q(N)$, which is independent of $p$. The prime $p$ is only relevant when we define the edges of the graph.}\]
\end{proposition}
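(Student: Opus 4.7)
The plan is to identify both sides as free modules of rank one over the semisimple commutative $\mathbb{R}$-algebra $\mathbf{T} \otimes \mathbb{R}$, after using Theorem~\ref{thm:iso-hecke-algebras} to identify $T$ with $\mathbf{T}$.

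First I would record that $\mathbf{T} \otimes \mathbb{R}$ is semisimple: the Hecke operators $T_\ell$ with $\ell \nmid qN$ are self-adjoint on $S_2(\Gamma_0(qN))_{q\textup{-new}}$ with respect to the Petersson inner product and are therefore simultaneously diagonalizable. Combined with multiplicity one for newforms, this gives an orthogonal decomposition of $S_2(\Gamma_0(qN))_{q\textup{-new}}$ as a direct sum of one-dimensional joint eigenspaces indexed by normalized $q$-newforms, so that $\mathbf{T} \otimes \mathbb{R} \cong \prod_i K_i$ is a product of copies of $\mathbb{R}$ and $\mathbb{C}$ indexed by the Galois conjugacy classes of such newforms, and $S_2(\Gamma_0(qN))_{q\textup{-new}}$ is free of rank one over $\mathbf{T} \otimes \mathbb{R}$.

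Turning to $\Divz(X_p^q(N)) \otimes \mathbb{R}$: by Definition~\ref{def:hecke-graph}, $T$ acts faithfully on $\Divz(X_p^q(N))$, so via Theorem~\ref{thm:iso-hecke-algebras} the module $\Divz(X_p^q(N)) \otimes \mathbb{R}$ is faithful over $\mathbf{T} \otimes \mathbb{R} = \prod_i K_i$. Any finitely generated faithful module over such a product of fields is of the form $\bigoplus_i K_i^{n_i}$ with every $n_i \geq 1$; thus to conclude that this module is itself free of rank one, and hence isomorphic as a $T\otimes\mathbb{R}$-module to $S_2(\Gamma_0(qN))_{q\textup{-new}}$, it suffices to match $\mathbb{R}$-dimensions.

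The main obstacle is this dimension count. I would compute $\rank_\mathbb{Z} \Divz(X_p^q(N))$ as the number of vertices of $X_p^q(N)$ minus the number of its connected components, and compare with $\dim_\mathbb{R} S_2(\Gamma_0(qN))_{q\textup{-new}} = \dim S_2(\Gamma_0(qN)) - 2\dim S_2(\Gamma_0(N))$ coming from the definition of the old/new decomposition. Equality of these two quantities should follow from the geometric description of $X_0(qN)_{\Fq}$ as two copies of $X_0(N)_{\Fq}$ meeting transversally at the vertices of $X_p^q(N)$ (recalled just before Definition~\ref{def:graph}), together with the Eichler mass formula and the standard dimension formulas for cusp forms on $\Gamma_0(qN)$ and $\Gamma_0(N)$. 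Alternatively, the equality can be extracted directly from Ribet's proof of Theorem~\ref{thm:iso-hecke-algebras} in \cite{ribet90}, which already identifies the $q$-new parts on both sides at the level of Jacobians. Once the dimensions agree, both modules are free of rank one over $\mathbf{T} \otimes \mathbb{R}$, and any choice of cyclic generators gives the desired isomorphism.
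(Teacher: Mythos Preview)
Your argument has a genuine gap at the step where you invoke multiplicity one. The space $S_2(\Gamma_0(qN))_{q\textup{-new}}$ is not spanned by newforms of level $qN$: for every newform $f$ of level $qM$ with $M$ a proper divisor of $N$, the whole $\sigma_0(N/M)$-dimensional span of $\{f(dz):d\mid N/M\}$ lies in the $q$-new space, and the anemic Hecke algebra $T_0$ (generated by the $T_\ell$ with $\ell\nmid qN$) acts on this span through a single character. Thus the joint $T_0$-eigenspaces are typically not one-dimensional, $\mathbf{T}\otimes\RR$ is not a product of fields in general, and neither side is free of rank one over it. Concretely, the operators $T_\ell$ with $\ell\mid N$ are not self-adjoint for the Petersson product, and the matrices representing them on such an oldform block can fail to be diagonalizable (for instance when $\ell^2\mid M$ and $\ell\mid N/M$, so that $a_\ell(f)=0$ and $U_\ell$ acts nilpotently on a nontrivial block). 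Your proof is essentially correct only in the case $N=1$ treated by Sugiyama, where every $q$-newform is already a newform.

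This is exactly the difficulty the paper's proof confronts. There one again decomposes both modules into $T_0$-isotypic pieces $V(\gamma)$ and $W(\gamma)$, but then analyses the action of the remaining $T_\ell$ (for $\ell\mid N$) on $W(\gamma)$ by writing down explicit companion-type matrices $A_\ell$ on the basis $\{f_\gamma(d\ell^i z)\}$. One finds that the minimal polynomial of $T_\ell$ on $W(\gamma)$ has degree equal to the $\ell$-part of $N/M$, so that $W(\gamma)$ is a free rank-one module over the quotient $\mathbf{T}_\gamma\otimes\RR$ of the Hecke algebra. The isomorphism $T\cong\mathbf{T}$ of Theorem~\ref{thm:iso-hecke-algebras} then forces the same minimal polynomials on the $V(\gamma)$ side, giving $\dim V(\gamma)\ge\dim W(\gamma)$ for every $\gamma$; equality follows from the global dimension count extracted from Ribet's proof, which is the one ingredient you identified correctly.
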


\begin{proof}
Let $T_0\subset T$ be the subalgebra generated by the Hecke operators $T_\ell$ with $(\ell,qN)=1$. The Hecke operators $T_\ell$ with $(\ell,qN)=1$ are represented by commuting symmetric matrices. Let $\cS$ be the set of $\R$-valued characters  on $T_0$. Then 
 \[\Divz(X_p^q(N))\otimes \mathbb{R}=\bigoplus_{\gamma\in\cS} V(\gamma),\]
 where $V(\gamma)$ is a $T_0\otimes \RR$-submodule of $\Divz(X_p^q(N))\otimes \mathbb{R}$ on which $T_0$ acts via $\gamma$. A priori, this is only a decomposition of $T_0\otimes \RR$-modules. Since $T$ is commutative, $V(\gamma)$ is invariant under the action of all elements of $T$. Therefore, the aforementioned decomposition is in fact a decomposition of  $T\otimes\RR$-modules.

Note that $S_2(\Gamma_0(qN))_{q\textup{-new}}$ can equally be decomposed into  $\gamma$-eigenspaces $W(\gamma)$. As the Hecke algebras $T$ and $\mathbf{T}$ are isomorphic by Theorem~\ref{thm:iso-hecke-algebras}, we may decompose $S_2(\Gamma_0(qN))_{q\textup{-new}}$ into submodules on which $T_0$ acts via $\gamma$ as $\gamma$ runs over $\cS$.

Let $f$ be a normalized newform of level $M'\mid qN$ and let $W(f)$ be the subspace of $S_2(\Gamma_0(qN))$ generated by $\{f(dz)\mid d\mid (qN)/M'\}$. It is well known that there is a decomposition
\[S_2(\Gamma_0(qN))=\bigoplus_f W(f),\]
where the sum runs over all newforms of level $M'\mid qN$. Note that \[S_2(\Gamma_0(qN))=S_2(\Gamma_0(qN))_{q-\textup{old}}\oplus S_2(\Gamma_0(qN))_{q-\textup{new}}\] is a decomposition of $\mathbf{T}$-modules. The multiplicity one theorem for cusp forms now implies that for each character $\gamma\in \cS$, there exists a unique normalized cuspform $f_\gamma$ of level $qM$ dividing $qN$ such that $T_\ell f_\gamma=\gamma(T_\ell)f_\gamma$ for all $\ell\nmid qN$.  Let $\mathbf{T}'_\gamma$ be the subalgebra of $\textup{End}(S_2(\Gamma_0(qM))$ generated by all Hecke operators. Then there is an extension $\gamma'$ of $\gamma$ such that $T_\ell f_\gamma=\gamma'(T_\ell)f_\gamma$ for all $T_\ell\in \mathbf{T}'_\gamma$. By an abuse of notation, we will denote $\gamma'$ by $\gamma$ from now on. 

Let $\mathbf{T}_\gamma$ be the Hecke algebra in $\textup{End}(S_2(\Gamma_0(qN))$ generated by all Hecke operators $T_\ell$ such that $(\ell, N/M)=1$. Let $\ell$ be a  prime number and write $\ell^k$ for the exact power of $\ell$ dividing $N/M$. Let $W(\gamma)$ be the space of modular forms generated over $\R$ by 
\[\{f_\gamma(dz): d\mid (N/M)\}.\] 
We consider two cases.

\noindent\textbf{\underline{Case 1 - $\ell\nmid M$:}} Let
\[A_\ell=\begin{pmatrix}0&0&\dots &\dots&0\\
1&0&\dots &\dots&0\\
0&1&0&\dots &0\\
\dots&\dots&\dots&\dots&\dots\\
\dots&\dots&\dots&\dots&\dots\\
\dots&\dots&1&0&-l\\
\dots&\dots&\dots&1&\gamma(T_\ell)
\end{pmatrix}\in \textup{Mat}_{k+1,k+1}(\mathbb{R}).\]
This describes the action of $T_l$ on the basis $\{f_\gamma(d\ell^iz):i=0,1,\ldots, k\}$, where $d$ is a positive integer coprime to $\ell$ dividing $N/M$.
In particular, $T_\ell$ acts by a block matrix on $W(\gamma)$, whose blocks are given by $A_\ell$. Note that the characteristic polynomial and the minimal polynomial of $A_\ell$ coincide and are of degree $k+1$. Let  $p_\ell(x)$ denote this polynomial. We obtain an isomorphism of $\RR [T_\ell]$-modules \[\mathbb{R}[T_l\mid W(\gamma)]\cong \mathbb{R}[x]/p_\ell(x).\] 

\noindent\textbf{\underline{Case 2 - $\ell| M$:}} Define
\[A_\ell=\begin{pmatrix}0&0&\dots &\dots&0\\
1&0&\dots &\dots&0\\
0&1&0&\dots &0\\
\dots&\dots&\dots&\dots&\dots\\
\dots&\dots&\dots&\dots&\dots\\
\dots&\dots&1&0&0\\
\dots&\dots&\dots&1&\gamma(T_l)
\end{pmatrix}\in \textup{Mat}_{k+1,k+1}(\mathbb{R}).\]
Once again, let  $p_\ell(x)$ be the characteristic polynomial. Then  
\[\mathbb{R}[T_\ell\mid{W(\gamma)}]\cong \mathbb{R}[x]/p_\ell(x).\]

Let $\ell_1,\dots, \ell_s$ be the primes dividing $N/M$. Let $I$ be the ideal generated $\{p_{\ell_i}(x_i), 1\le i \le s\}$ in $\R[x_1,\dots, x_s]$. There is a $T$-module isomorphism
\[W(\gamma)\cong \mathbb{R}[x_1,\dots, x_s]/I,\]
where $\mathbf{T}_\gamma$ acts on both sides via $\gamma$. 

For every character $\gamma\in\cS$, let $T_\gamma$ be the quotient of $T$ acting on $V(\gamma)$ faithfully. It follows that
\[T\otimes \RR\cong \bigoplus T_\gamma\otimes \RR\]
and we have a similar decomposition for $\mathbf{T}\otimes \RR$. It follows from Theorem~\ref{thm:iso-hecke-algebras} that $T_\gamma\otimes \RR$ and $\mathbf{T}_\gamma\otimes \RR$ are isomorphic as $\R$-algebras.  In particular, $p_\ell$ is the minimal polynomial of $T_\ell$ as an element in $T_\gamma$. It follows that $\dim(V(\gamma))\ge \dim(W(\gamma))$. This holds for all characters $\gamma\in \cS$. Furthermore, the $\RR$-vector spaces $S_2(\Gamma_0(qN))_{q\textup{-new}}$ and $\Divz(X_p^q(N))\otimes\RR$ have the same dimension (see \cite[proof of Theorem 3.10]{ribet90}). Thus, $\dim_\RR(V(\gamma))=\dim_\RR(W(\gamma))$ for all $\gamma$. This concludes the proof of the proposition.
\end{proof}

\subsection{The Brandt matrix and Ihara zeta function}
We recall the definition of the Ihara zeta function:
 \begin{defn}
\label{def:ihara-zeta}
    Given a graph $X$, we write $\chi(X)$ for its Euler characteristic. The Ihara zeta function of $X$ is defined to be $$Z(X,S)=\frac{(1-S^2)^{\chi(X)}}{\det(I-AS+(D-I)S^2)}\in \ZZ[S],$$
    where $A$ and $D$ are the adjacency matrix and valency matrix of $X$, respectively.
\end{defn}
Our goal is to describe the Ihara zeta function of $X_p^q(N)$ in terms of the Brandt matrix, which we describe below.
 
 Recall that $n=(q-1)/12$. Let $d_N$ be the number of cyclic subgroups of order $N$ in $\Z/N\Z\times \Z/N\Z$. Then the graph $X_p^q(N)$ has $nd_N=(q-1)d_N/12$ vertices.

\begin{defn}\label{def:brandt}
   Let $\{(E_i,C_i):1\le i\le nd_N\}$ denote the set of vertices of $X_p^q(N)$. Let $\cX_{i,j}^{(p)}$ the set of  $p$-isogenies from $E_i$ to $E_j$ that map $C_i$ to $C_j$. The Brandt matrix $B_p^q(N)=(b_{i,j})_{1\le i,j\le nd_N}$ is defined by $$b_{i,j}=\frac{1}{2}\left\vert\cX_{i,j}^{(p)}\right\vert.$$ 
\end{defn}

\begin{lemma} 
\label{lem-adjacency-matrix}
The Brandt matrix $B_p^q(N)$ is the adjacency matrix of the graph $X_p^q(N)$. In particular, $B_p^q(N)$ represents the adjacency operator on $\textup{Div}(X_p^q(N))$. 
\end{lemma}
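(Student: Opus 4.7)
The plan is to prove the lemma by directly comparing the two matrices entry-by-entry against the definitions. For any two vertices $(E_i,C_i)$ and $(E_j,C_j)$ of $X_p^q(N)$, the edges joining them are exactly the $p$-isogenies $\phi\colon E_i\to E_j$ with $\phi(C_i)=C_j$, modulo the equivalence that identifies two such isogenies whenever they have the same kernel in $E_i$; equivalently, whenever they differ by post-composition with an element of $\mathrm{Aut}(E_j)$. Hence the $(i,j)$-entry of the adjacency matrix $A_{i,j}$ is the cardinality of $\cX_{i,j}^{(p)}$ divided by $|\mathrm{Aut}(E_j)|$.

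The key input is the hypothesis $q\equiv 1\pmod{12}$, recalled right after the definition of $\Sigma$: it implies that $R_i^\times=\mathrm{Aut}(E_i)=\{\pm 1\}$ for every $i$. The map $\cX_{i,j}^{(p)}\to\{\text{edges between }(E_i,C_i)\text{ and }(E_j,C_j)\}$ is therefore exactly two-to-one (sending $\phi$ and $-\phi$ to the same edge, and these are indeed distinct elements of $\cX_{i,j}^{(p)}$ since $-\phi\neq\phi$ on a $p$-isogeny with $p$ odd, and moreover $-\phi$ still maps $C_i$ to $C_j$ because $[-1]$ preserves every subgroup). This gives the number of edges between the two vertices as $\tfrac{1}{2}\absolute{\cX_{i,j}^{(p)}}=b_{i,j}$, proving that $B_p^q(N)$ is the adjacency matrix of $X_p^q(N)$. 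The same counting argument applies uniformly whether $i\neq j$ or $i=j$; in the loop case, $b_{i,i}$ records the number of loops at $(E_i,C_i)$, which is the convention consistent with the definition of the Ihara zeta function recalled in Definition~\ref{def:ihara-zeta}.

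The second assertion of the lemma, that $B_p^q(N)$ represents the adjacency operator on $\mathrm{Div}(X_p^q(N))$, is then immediate: taking the standard $\Z$-basis $\{[(E_i,C_i)]\}$, the adjacency operator sends $[(E_i,C_i)]$ to $\sum_j (\text{number of edges from }i\text{ to }j)\cdot[(E_j,C_j)]$, which by the previous paragraph equals $\sum_j b_{i,j}[(E_j,C_j)]$. The only real point requiring care is the bookkeeping for loops and the two-to-one identification coming from $\{\pm 1\}$, since all the rest is a direct translation of the definitions.
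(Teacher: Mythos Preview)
Your proof is correct and follows essentially the same route as the paper's: both establish that the map from $p$-isogenies in $\cX_{i,j}^{(p)}$ to edges (equivalently, to kernels in $E_i[p]$) is exactly two-to-one, using that $\mathrm{Aut}(E_i)=R_i^\times=\{\pm 1\}$ under the hypothesis $q\equiv 1\pmod{12}$. One small remark: your parenthetical ``with $p$ odd'' is unnecessary and slightly misleading, since $-\phi\neq\phi$ for any nonzero isogeny over $\overline{\Fq}$ with $q\neq 2$, regardless of whether $p=2$.
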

\begin{proof}
    This essentially follows from definitions;  we give the details of the proof for the convenience of the reader. Let $\phi \colon E_i\to E_j$ be a isogeny of degree $p$. Then $\ker(\phi)$ is a cyclic subgroup of $E_i[p]$ of order $p$. If conversely $D\subset E_i[p]$ is a cyclic subgroup of order $p$ such that $E_i/E_i[p]\cong E_j$, then $\phi\colon E_i\to E_i/E_i[p]$ defines an isogeny of degree $p$. We obtain a well-defined surjective map
    \begin{align*}
        \kappa\colon& \{p\text{-isogenies from }E_i\text{ to }E_j\}\\
        &\to \{\textup{cyclic subgroups of order $p$ in $E_i[p]$ with $E_i/E_i[p]\cong E_j$}\}.
    \end{align*}
    Let $r\in R_i^\times =\textup{End}(E_i)^\times=\{\pm 1\}$. Clearly, $\phi\circ r$ and $\phi$ have the same kernel. Furthermore, $\phi\circ r(C_i)=\phi(C_i)$ for all cyclic subgroups $C\subset E_i[p]$. Thus, $\kappa$ is a two-to-one map. In particular, there are $b_{i,j}$ cyclic subgroups $D\subset E_i[p]$ of order $p$ such that $(E_i/D,C_i+D/D)=(E_j,C_j)$. This concludes the proof of the lemma.
\end{proof}
We conclude the present section with the following:
\begin{corollary}\label{cor:zeta-function-graph}
    The equality
    \[
    Z(X_p^q(N),S)=\frac{(1-S^2)^{\chi(X_p^q(N))}}{\det(1-B_p^q(N)S+pS^2I)}
    \]
    holds.
\end{corollary}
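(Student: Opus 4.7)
The plan is to read the result directly off the definition of the Ihara zeta function in Definition~\ref{def:ihara-zeta}, once two ingredients are in place: first, that the Brandt matrix $B_p^q(N)$ coincides with the adjacency matrix $A$ of $X_p^q(N)$, and second, that $X_p^q(N)$ is $(p+1)$-regular so that the valency matrix $D$ equals $(p+1)I$, making $D-I=pI$.

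The first ingredient is exactly Lemma~\ref{lem-adjacency-matrix}, so nothing further is needed there. For the second ingredient I would argue as follows. For each supersingular elliptic curve $E_i$ over $\overline{\F_q}$, the subgroup scheme $E_i[p]$ contains exactly $p+1$ finite flat subgroup schemes of order $p$, which are the possible kernels of $p$-isogenies out of $E_i$. Hence there are $2(p+1)$ $p$-isogenies from $E_i$ (the factor of $2$ coming from the automorphism group $R_i^\times=\{\pm1\}$, as used in the proof of Lemma~\ref{lem-adjacency-matrix}), and the row sum $\sum_j b_{i,j}$ of the Brandt matrix equals $p+1$. Since $\gcd(N,p)=1$, every $p$-isogeny $\phi\colon E_i\to E_j$ restricts to an isomorphism $E_i[N]\xrightarrow{\sim} E_j[N]$, so $C_i$ is sent to a uniquely determined cyclic subgroup $\phi(C_i)$ of order $N$ in $E_j$. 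Consequently, adding a $\Gamma_0(N)$-level structure does not change the count: the valency of any vertex $(E_i,C_i)$ in $X_p^q(N)$ is still $p+1$, and the graph is $(p+1)$-regular.

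Putting these two facts together, the denominator of the Ihara zeta function becomes
\[
\det\bigl(I-AS+(D-I)S^2\bigr)=\det\bigl(I-B_p^q(N)S+pS^2I\bigr),
\]
and the numerator $(1-S^2)^{\chi(X_p^q(N))}$ is unchanged. This yields precisely the stated formula.

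I do not expect any real obstacle: both points amount to bookkeeping. The only subtle issue to watch out for is the treatment of loops and of the factor $1/2$ built into the definition of $b_{i,j}$, which must be compatible with the convention that a loop contributes to the valency in the same way it contributes to the diagonal entry of $A$. This is implicitly handled by the $\{\pm 1\}$-quotient argument already carried out in Lemma~\ref{lem-adjacency-matrix}, so the count $\sum_j b_{i,j}=p+1$ matches the valency of $(E_i,C_i)$ consistently, and the corollary follows.
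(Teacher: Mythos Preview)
Your proposal is correct and follows essentially the same route as the paper's own proof: invoke Lemma~\ref{lem-adjacency-matrix} to identify $A$ with $B_p^q(N)$, observe that $X_p^q(N)$ is $(p+1)$-regular (since $p\nmid qN$ implies $E_i[p]\cong(\Z/p\Z)^2$ has $p+1$ cyclic subgroups of order $p$, and the level-$N$ structure is carried along), and substitute $D-I=pI$ into Definition~\ref{def:ihara-zeta}. The paper records this in two lines, whereas you spell out the regularity count and the loop/valency compatibility in more detail, but the argument is the same.
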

\begin{proof}

As $p$ is coprime to $qN$, each  $(E,C)$ admits $p+1$ isogenies of degree $p$. In particular, the graph $X_p^q(N)$ is $(p+1)$-regular.  Thus, $D-I=p I$. The corollary now follows from  Lemma~\ref{lem-adjacency-matrix}.
\end{proof}
We have a tautological exact sequence
\[0\to \Divz(X_p^q(N))\to \Div(X_p^q(N))\to \ZZ\to 0.\]
After tensoring with $\RR$ this sequence splits and we can find an element $\delta\in \Div(X_p^q(N))$ such that 
\[\Div(X_p^q(N))\otimes \mathbb{R}=(\Divz(X_p^q(N))\otimes \mathbb{R})\oplus \mathbb{R}\delta\]
as $T\otimes \RR$-modules.
For all $\ell\nmid qN$, we have $T_\ell\delta=(\ell+1)\delta$. Thus, Proposition~\ref{prop:iso-S2-Div0} implies that
\begin{equation}
 \label{eq:determinats}   
\det(1-B_p^q(N)S-lS^2)=(1-S)(1-pS)\det(1-T_pS+Sp^2\mid S_2(\Gamma_0(Nq))_{q\textup{-new}}).\end{equation}
 \section{Proof of Theorem~\ref{thmA}} 
The goal of this section is to prove Theorem~\ref{thmA} given in the introduction. As before, $p$ is a fixed prime that is coprime to $qN$. We consider $X_0(qN)_{\Fp}$, the modular curve $X_0(qN)$ as a curve over ${\Fp}$. Since $p$ is fixed throughout, we shall drop the subscript $\Fp$ from the notation for simplicity and simply write $X_0(qN)$ and $X_0(N)$ for the curves defined over $\Fp$.

The final step of our proof of Theorem~\ref{thmA} is to relate the Brandt matrix to the Hasse--Weil Zeta function, whose definition we recall below. 
\begin{defn}
\label{def:Hase-weil-zeta}
    Given an algebraic curve $C$ over $\Fp$, we define the Hasse--Weil zeta function of $C$ by $$W(C,S)=\prod_{x\in|C|}\frac{1}{1-S^{\deg(x)}}\in 1+S\ZZ[[S]],$$
    where $|C|$ is the set of closed points in $C$.  
\end{defn}

\begin{remark}\label{rk:level-one}
    If $N=1$, we have $X_0(N)=\mathbf{P}^1$. In this case, the Hasse--Weil zeta function of $X_0(N)$ is given by
\[W(X_0(N),S)=\frac{1}{(1-S)(1-pS)}.\]
\end{remark}

\begin{lemma}\label{lem:Brandt-Weils}
Let $B_p^q(N)$ be the Brandt matrix given in Definition~\ref{def:brandt}. 
Then
\[\det(1-B_p^q(N)S-pS^2)=W(X_0(qN),S)W(X_0(N),S)^{-2}.\]
\end{lemma}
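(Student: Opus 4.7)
The plan is to combine the Eichler–Shimura relation with the $q$-old/$q$-new decomposition of weight-two cuspforms at level $qN$, and then match the result with the identity \eqref{eq:determinats} already established at the end of Section~2.

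First I would apply Eichler–Shimura. Since $p$ is coprime to $qN$, both modular curves $X_0(qN)$ and $X_0(N)$ have good reduction at $p$. The Hasse–Weil zeta function of a smooth projective curve over $\Fp$ takes the standard form
\[
W(C,S) = \frac{\det\bigl(1 - \Frob_p S \mid H^1_{et}(C_{\overline{\Fp}}, \QQ_\ell)\bigr)}{(1-S)(1-pS)},
\]
and for $C = X_0(M)$ the Eichler–Shimura congruence identifies the numerator with $\det(1 - T_p S + pS^2 \mid S_2(\Gamma_0(M)))$. This is consistent with Remark~\ref{rk:level-one} when $M=1$, since then $S_2(\Gamma_0(1)) = 0$.

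Next I would factor the numerator at level $qN$ using the decomposition
\[
S_2(\Gamma_0(qN)) = S_2(\Gamma_0(qN))_{q\text{-old}} \oplus S_2(\Gamma_0(qN))_{q\text{-new}}.
\]
Because $p \nmid qN$, the degeneracy embeddings $\iota_1, \iota_2$ intertwine the $T_p$-action at levels $N$ and $qN$, so $S_2(\Gamma_0(qN))_{q\text{-old}} \cong S_2(\Gamma_0(N))^{\oplus 2}$ as $\RR[T_p]$-modules. Hence
\[
\det(1 - T_p S + pS^2 \mid S_2(\Gamma_0(qN))) = \det(1 - T_p S + pS^2 \mid S_2(\Gamma_0(N)))^2 \cdot \det(1 - T_p S + pS^2 \mid S_2(\Gamma_0(qN))_{q\text{-new}}),
\]
and dividing yields
\[
\frac{W(X_0(qN), S)}{W(X_0(N), S)^2} = (1-S)(1-pS) \det(1 - T_p S + pS^2 \mid S_2(\Gamma_0(qN))_{q\text{-new}}).
\]
A final appeal to \eqref{eq:determinats} (corrected so that the Brandt side reads $\det(1 - B_p^q(N)S + pS^2)$, consistent with Corollary~\ref{cor:zeta-function-graph}) rewrites the right-hand side as the desired characteristic polynomial of the Brandt matrix.

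The main obstacle is verifying the $T_p$-equivariance of the identification $S_2(\Gamma_0(qN))_{q\text{-old}} \cong S_2(\Gamma_0(N))^{\oplus 2}$. This rests on the standard fact that the $T_p$-correspondence (defined by summing over cyclic $p$-subgroups) commutes with the degeneracy maps between $X_0(qN)$ and $X_0(N)$ when $p$ is coprime to the level; equivalently, the $q$-expansion formula for $T_p$ is identical at both levels. Beyond this careful bookkeeping, the proof is a direct computation combining Eichler–Shimura with the identity already established in Section~2.
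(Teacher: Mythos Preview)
Your proposal is correct and follows essentially the same route as the paper: write each Hasse--Weil zeta via Eichler--Shimura as $(1-S)^{-1}(1-pS)^{-1}\det(1-T_pS+pS^2\mid S_2)$, cancel the $q$-old contribution against $W(X_0(N),S)^2$, and then invoke \eqref{eq:determinats}. The only difference is presentational: you spell out the $T_p$-equivariance of $\iota_1,\iota_2$ to justify $S_2(\Gamma_0(qN))_{q\text{-old}}\cong S_2(\Gamma_0(N))^{\oplus 2}$ as $\RR[T_p]$-modules, whereas the paper passes directly to a product over $q$-new eigenforms and inserts an (essentially redundant) intermediate appeal to Theorem~\ref{thm:iso-hecke-algebras} before citing \eqref{eq:determinats}.
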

\begin{proof}

As discussed in \cite[page 12]{li-book},  we can write
\[W(X_0(qN),S)=(1-S)^{-1}(1-pS)^{-1}\prod_{i=1}^{g(qN)}(1-\lambda_i(p)S+pS^2),\]
where $g(qN)$ is the genus of $X_0(qN)$ and $\lambda_i$ are eigenvalues of $T_p$ on $X_0(qN)$ counted with multiplicities. Therefore, dividing by $W(X_0(N),S)^2$ gives
\begin{align*}&
    W(X_0(qN),S)W(X_0(N),S)^{-2}\\&=(1-S)(1-pS)\prod_f(1-a_p(f)S+pS^2),
\end{align*}
where the product runs over the set of normalized eigen-$q$-newforms $f$ (counted with multiplicities) in $S_2(\Gamma_0(qN))$. 
On fixing a $T_p$-eigen-basis, we have
\[\prod_f(1-a_p(f)S+pS^2)=\det(1-ST_p+pS^2\mid S_2(\Gamma_0(qN))_{q\textup{-new}}).\]
 Theorem~\ref{thm:iso-hecke-algebras} tells us that the right-hand side is equal to \[\det(1-ST_p+pS^2\mid \Divz(X_p^q(N))\otimes \RR).\] Thus, \eqref{eq:determinats} implies that
\[\prod_f(1-a_p(f)S+pS^2)(1-S)(1-pS)=\det(1-B_p^q(N)S+pS^2),\]
from which  the result follows.
\end{proof}

We can now conclude the proof of  Theorem~\ref{thmA}:
\begin{corollary}
\label{cor:comparison-zeta-function}We have
\[W(X_0(qN),S)W(X_0(N),S)^{-2}Z(X_p^q(N),S)=(1-S^2)^{\chi(X_p^q(N))}.\]    
\end{corollary}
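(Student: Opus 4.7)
The plan is to combine the two main preparatory results already established: Corollary~\ref{cor:zeta-function-graph} and Lemma~\ref{lem:Brandt-Weils}. These two results were designed precisely so that their respective occurrences of the Brandt-matrix determinant $\det(1-B_p^q(N)S+pS^2I)$ match, so that taking a ratio causes the determinant to cancel.

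Concretely, I would start from the formula
\[
Z(X_p^q(N),S)=\frac{(1-S^2)^{\chi(X_p^q(N))}}{\det(1-B_p^q(N)S+pS^2 I)}
\]
supplied by Corollary~\ref{cor:zeta-function-graph}, and then substitute the determinant in the denominator using Lemma~\ref{lem:Brandt-Weils}, which identifies $\det(1-B_p^q(N)S+pS^2 I)$ with the ratio $W(X_0(qN),S)\cdot W(X_0(N),S)^{-2}$. Rearranging the resulting equality yields exactly
\[
W(X_0(qN),S)\,W(X_0(N),S)^{-2}\,Z(X_p^q(N),S)=(1-S^2)^{\chi(X_p^q(N))},
\]
which is the assertion of the corollary. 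No further computation is needed.

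There is essentially no obstacle at this final stage: all the real work has been done in Section~2 (the description of $Z(X_p^q(N),S)$ in terms of the Brandt matrix, using that $X_p^q(N)$ is $(p+1)$-regular) and in Lemma~\ref{lem:Brandt-Weils} (the identification of the Brandt-matrix determinant with the relevant ratio of Hasse--Weil zeta functions, which in turn rested on Ribet's isomorphism between Hecke algebras, Theorem~\ref{thm:iso-hecke-algebras}, and the spectral decomposition of Proposition~\ref{prop:iso-S2-Div0}). The only thing to verify is that the sign of $pS^2$ in the two formulas is consistent; here one should read the $-pS^2$ appearing in the statement of Lemma~\ref{lem:Brandt-Weils} in light of its proof, where $+pS^2$ is the expression that actually emerges and matches Corollary~\ref{cor:zeta-function-graph}.
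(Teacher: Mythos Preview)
Your proposal is correct and matches the paper's own proof, which simply says the result follows from combining Lemma~\ref{lem:Brandt-Weils} with Corollary~\ref{cor:zeta-function-graph}. Your remark about the sign of $pS^2$ is also apt: the statement of Lemma~\ref{lem:Brandt-Weils} contains a typo, and the $+pS^2$ from its proof is what is actually needed to match Corollary~\ref{cor:zeta-function-graph}.
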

\begin{proof}
This follows from combining Lemma~\ref{lem:Brandt-Weils} with Corollary~\ref{cor:zeta-function-graph}.   \end{proof}

\bibliographystyle{amsalpha}
\bibliography{references}

\end{document}